\theoremstyle{plain}
\newtheorem{theorem}{Theorem}[section]
\newtheorem{corollary}[theorem]{Corollary}
\theoremstyle{remark}
\theoremstyle{definition}
\newtheorem{definition}[theorem]{Definition}
\title{On some Properties of Generalized Tribonacci Spinors}
\author{\scriptsize Gamaliel Cerda-Morales}
\date{}
\begin{document}
\maketitle

\vspace{-20pt}
\begin{center}
{\footnotesize Instituto de Matem\'aticas, Pontificia Universidad Cat\'olica de Valpara\'iso, Blanco Viel 596, Cerro Bar\'on, Valpara\'iso, Chile. \\
E-mails: gamaliel.cerda.m@mail.pucv.cl 
}\end{center}

\vspace{5pt}

\begin{abstract}
Spinors are used in physics quite extensively. The goal of this study is also the spinor structure lying in the basis of the quaternion algebra. In this paper, first, we have introduced spinors mathematically. Then, we have defined Tribonacci spinors using the generalized Tribonacci quaternions. Later, we have established the structure of algebra for these spinors. Finally, we have proved some important formulas such as Binet and Cassini-like formulas which are given for some series of numbers in mathematics for Tribonacci spinors.
\end{abstract}

\medskip
\noindent
\subjclass{\footnotesize {\bf Mathematical subject classification 2010:} 11B39, 15A66, 11R52.}

\medskip
\noindent
\keywords{\footnotesize {\bf Keywords:} Binet's formula, Spinors, Tribonacci numbers, Tribonacci quaternions.}
\medskip

\vspace{5pt}

\section{Introduction}\label{sec:1}
\setcounter{equation}{0}
\subsection{Spinors and quaternions}\label{sec:1}

First, let us introduce the spinors mentioned by Cartan in \cite{Ca}. Consider that $\mathbf{a}=(a_{1},a_{2},a_{3})\in \mathbb{C}^{3}$ is the isotropic vector and $\mathbb{C}^{3}$ is the three-dimensional complex vector space. So, we obtain $a_{1}^{2}+a_{2}^{2}+a_{3}^{2}=0$. The set of isotropic vectors in the vector space $\mathbb{C}^{3}$ forms a two-dimensional surface in the space $\mathbb{C}^{2}$. If this two-dimensional surface is parameterized by $\mu_{1}$ and $\mu_{2}$ coordinates, then $$a_{1}=\mu_{1}^{2}-\mu_{2}^{2},\ \ a_{2}=i(\mu_{1}^{2}+\mu_{2}^{2}),\ \ a_{3}=-2\mu_{1}\mu_{2}$$ is obtained. It is seen from the solution of this equation that $$\mu_{1}=\pm \sqrt{\frac{a_{1}-ia_{2}}{2}},\ \mu_{2}=\pm \sqrt{\frac{-a_{1}-ia_{2}}{2}}.$$ It is seen that in the complex vector space $\mathbb{C}^{3}$, each isotropic vector corresponds to two vectors, $(\mu_{1},\mu_{2})$ and $(-\mu_{1},-\mu_{2})$ in the space $\mathbb{C}^{2}$. Conversely, both vectors so given in space $\mathbb{C}^{2}$ correspond to the same isotropic vector $\mathbf{a}$. Cartan stated that the two-dimensional complex vectors
\begin{equation}\label{eq:1}
\mu=(\mu_{1},\mu_{2})  \cong \left[ \begin{array}{c} \mu_{1} \\ \mu_{2} \end{array}\right]
\end{equation}
described in this way are called as $\emph{spinor}$ (see \cite{Ca}). The set of spinors is denoted by $\mathrm{S}_{p}$. In addition, Cartan emphasized that spinors are not only two-dimensional complex vectors, but also represent three-dimensional complex isotropic vectors (see also, for example, \cite{Sa}).

Let us consider that the complex conjugate of spinor $\mu$ is $\overline{\mu}$. So, Cartan in \cite{Ca} wrote that the conjugate of spinor $\mu$ is
\begin{equation}\label{eq:2}
\widetilde{\mu}=i\mathrm{C}\overline{\mu} =i\left[ \begin{array}{cc} 0&1 \\ -1 &0 \end{array}\right]\left[ \begin{array}{c} \overline{\mu_{1}} \\ \overline{\mu_{2}} \end{array}\right]=\left[ \begin{array}{c} i\overline{\mu_{2} }\\ -i\overline{\mu_{1}} \end{array}\right],
\end{equation}
where $\mathrm{C}= \left[ \begin{array}{cc} 0& 1\\ -1 &0 \end{array}\right]$. Moreover, Torres Del Castillo and S\'anchez Barrales \cite{To1} wrote that the mate of spinor $\mu$ is
\begin{equation}\label{eq:3}
\widehat{\mu}=-\mathrm{C}\overline{\mu}=- \left[ \begin{array}{cc} 0& 1\\ -1 &0 \end{array}\right]\left[ \begin{array}{c} \overline{\mu_{1}} \\ \overline{\mu_{2}} \end{array}\right]=\left[ \begin{array}{c} -\overline{\mu_{2} }\\ \overline{\mu_{1}} \end{array}\right].
\end{equation}

Furthermore, a real quaternion is defined with $$\mathrm{q}=q_{0}+\sum_{l=1}^{3}q_{l}\mathrm{e}_{l},\ \  q_{0}\mathrm{e}_{0}=q_{0},$$ where $q_{l}\in \mathbb{R}$ ($l=0,1,2,3$) and the quaternion basis $\{\mathrm{e}_{l}:\ l=0,1,2,3\}$ is given such that $\mathrm{e}_{1}^{2}=\mathrm{e}_{2}^{2}=\mathrm{e}_{3}^{2}=-\mathrm{e}_{0}$, $\mathrm{e}_{1}\mathrm{e}_{2}=\mathrm{e}_{3}=-\mathrm{e}_{2}\mathrm{e}_{1}$, $\mathrm{e}_{2}\mathrm{e}_{3}=\mathrm{e}_{1}=-\mathrm{e}_{3}\mathrm{e}_{2}$ and $\mathrm{e}_{3}\mathrm{e}_{1}=\mathrm{e}_{2}=-\mathrm{e}_{1}\mathrm{e}_{3}$.

Let  $S_{\mathrm{q}}=q_{0}$ and $V_{\mathrm{q}}=\sum_{l=1}^{3}q_{l}\mathrm{e}_{l}$ be scalar and vectorial parts of the quaternion $\mathrm{q}$. So, we can write the quaternion $\mathrm{q}$ as $\mathrm{q}=S_{\mathrm{\mathbf{q}}}+V_{\mathrm{\mathbf{q}}}$. The set of these quaternions is $\mathbb{H}$. Let  $\mathrm{p}=S_{\mathrm{p}}+V_{\mathrm{p}},  \mathrm{q}=S_{\mathrm{q}}+V_{\mathrm{q}} \in \mathbb{H}$ be two real quaternions. So, the quaternion product of these quaternions
$$
\mathrm{p}\times \mathrm{q}=S_{\mathrm{p}}S_{\mathrm{q}}-\langle V_{\mathrm{p}},V_{\mathrm{q}}\rangle +S_{\mathrm{p}}V_{\mathrm{q}}+S_{\mathrm{q}}V_{\mathrm{p}}+V_{\mathrm{p}}\wedge V_{\mathrm{q}},
$$ where $\langle,\rangle$ is usual inner product and $\wedge $ is cross product in real vector space $\mathbb{R}^{3}$. It is clear that the product of two real quaternions is non-commutative.In addition, if we consider that $\mathrm{q}^{*}$ is the conjugate of the quaternion $\mathrm{q}$, $\mathrm{q}^{*}$ is equal to $\mathrm{q}^{*}=S_{\mathrm{q}}-V_{\mathrm{q}}$, the norm of a quaternion $\mathrm{q}$ is
$Nr(\mathrm{q})=\sum_{l=0}^{3}q_{l}^{2}$. If $Nr(\mathrm{q})=1$, then $\mathrm{q}$ is called a unit quaternion.

Now, we give one relationship between spinors and quaternions expressed by Vivarelli (see \cite{Vi}). This correspondence between any quaternion $\mathrm{q}=q_{0}+\sum_{l=1}^{3}q_{l}\mathrm{e}_{l}$ and a spinor $\mu=\left[ \begin{array}{c} \mu_{1} \\ \mu_{2} \end{array}\right]$ is
\begin{equation}\label{eq:4}
\begin{aligned}
\sigma:\ &\mathbb{H}  \longrightarrow \mathrm{S}_{p}\\
&\ \ \mathrm{q} \longmapsto \sigma\left( q_{0}+\sum_{l=1}^{3}q_{l}\mathrm{e}_{l}\right)=\left[ \begin{array}{c} q_{3}+iq_{0} \\ q_{1}+iq_{2} \end{array}\right].
\end{aligned}
\end{equation}
(see \cite[Eq. (23)]{Vi}). So, we obtain a spinor formulation of the kinematics of rotation, which extends the quaternion formulation. Moreover, this function $\sigma$ is clearly linear and injective. Then, we can write $\sigma(\mathrm{p}+\mathrm{q})=\sigma(\mathrm{p})+\sigma(\mathrm{q})$, $\sigma(k\mathrm{q})=k \sigma(\mathrm{q})$, $k \in \mathbb{R}$ and $\ker(\sigma)=\{0\}$. If we take that the conjugate of the quaternion $\mathrm{q}$ is $\mathrm{q}^{*}=q_{0}-\sum_{l=1}^{3}q_{l}\mathrm{e}_{l}$, the spinor corresponding to conjugate quaternion can be written as $$\sigma(\mathrm{q}^{*})=\sigma\left( q_{0}-\sum_{l=1}^{3}q_{l}\mathrm{e}_{l}\right)=\left[ \begin{array}{c} -q_{3}+iq_{0} \\ -q_{1}-iq_{2} \end{array}\right].$$
Vivarelli \cite[Eq. (25)]{Vi} associated to the product of two quaternions $\mathrm{p}\times \mathrm{q}$, which is equal to a spinor matrix product as follows:
\begin{equation}\label{prod}
\mathrm{p}\times \mathrm{q}\longrightarrow  -i\breve{P}Q,
\end{equation}
where $$\breve{P}=\left[ \begin{array}{cc} p_{3}+ip_{0} & p_{1}-ip_{2}\\ p_{1}+ip_{2}& -p_{3}+ip_{0} \end{array}\right],\  Q=\left[ \begin{array}{c} q_{3}+iq_{0} \\ q_{1}+iq_{2} \end{array}\right].$$

There are four known ways of approaching the theory of spinors in the Euclidean three-space: by Cartan's isotropic vectors, by Clifford algebra as exemplified by Pauli matrices, by spinor ring algebra, by stereographic projection (see \cite{Ca,Sa,To}).

\subsection{Generalized Tribonacci quaternions}
The definitions of generalized Tribonacci quaternions were first given by Cerda-Morales \cite{Ce1}. So, generalized Tribonacci can be written as $$Q_{v,n}=V_{n}+\sum_{l=1}^{3}V_{n+l}\mathrm{e}_{l},\ \ V_{n}\mathrm{e}_{0}=V_{n}$$ where $V_{n}=V_{n}(V_{0},V_{1},V_{2};r,s,t)$ is the $n$-th generalized Tribonacci number defined by
\begin{equation}\label{eq:5}
V_{n}=rV_{n-1}+sV_{n-2}+tV_{n-3},\ n\geq 3,
\end{equation}
and $V_{0}$, $V_{1}$, $V_{2}$ are arbitrary integers and $r$, $s$ and $t$ are real numbers. Generalized Tribonacci quaternions are the third-order linear recurrence sequence and for $n\geq 0$ one can write
\begin{equation}\label{eq:6}
Q_{v,n+3}=rQ_{v,n+2}+sQ_{v,n+1}+tQ_{v,n}.
\end{equation}
Let $Q_{v,n}$ be $n$-th generalized Tribonacci quaternion. Then, the Binet formula for this quaternion is 
\begin{equation}\label{eq:66}
Q_{v,n}=\frac{\mathrm{P}\underline{\alpha}\alpha^{n}}{(\alpha-\omega_{1})(\alpha-\omega_{2})}-\frac{\mathrm{Q}\underline{\omega_{1}}\omega_{1}^{n}}{(\alpha-\omega_{1})(\omega_{1}-\omega_{2})}+\frac{\mathrm{R}\underline{\omega_{2}}\omega_{2}^{n}}{(\alpha-\omega_{2})(\omega_{1}-\omega_{2})},
\end{equation}
where $\mathrm{P}=V_{2}-(\omega_{1}+\omega_{2})V_{1}+\omega_{1}\omega_{2}V_{0}$, $\mathrm{Q}=V_{2}-(\alpha+\omega_{2})V_{1}+\alpha\omega_{2}V_{0}$, $\mathrm{R}=V_{2}-(\alpha+\omega_{1})V_{1}+\alpha\omega_{1}V_{0}$, $\underline{\alpha}=\mathrm{e}_{0}+\sum_{l=1}^{3}\alpha^{l}\mathrm{e}_{l}$, $\underline{\omega_{1}}=\mathrm{e}_{0}+\sum_{l=1}^{3}\omega_{1}^{l}\mathrm{e}_{l}$, $\underline{\omega_{2}}=\mathrm{e}_{0}+\sum_{l=1}^{3}\omega_{2}^{l}\mathrm{e}_{l}$ and $\alpha$, $\omega_{1}$ and $\omega_{2}$ are the roots of the cubic equation $x^{3}-rx^{2}-sx-t=0$ (see \cite{Ce1}). 

In addition, for the generalized Tribonacci quaternion $Q_{v,n}$, the generating function is 
$$
g_{Q_{v,n}}(x)=\frac{Q_{v,0}+(Q_{v,1}-rQ_{v,0})x+(Q_{v,2}-rQ_{v,1}-sQ_{v,0})x^{2}}{1-rx-sx^{2}-tx^{3}}.
$$
Furthermore, the summation formula for generalized Tribonacci quaternions is as follows: $$\sum_{l=0}^{n}Q_{v,l}=\frac{1}{\delta}\left(Q_{v,n+2}+(1-r)Q_{v,n+1}+tQ_{v,n}+\omega \right),$$ where $\delta=r+s+t-1$, $\omega=\lambda+\mathrm{e}_{1}(\lambda-\delta V_{0})+\mathrm{e}_{2}(\lambda-\delta(V_{0}+V_{1}))+\mathrm{e}_{3}(\lambda-\delta(V_{0}+V_{1}+V_{2}))$ and $\lambda=(r+s-1)V_{0}+(r-1)V_{1}-V_{2}$. Moreover, Cerda-Morales gave the generalized Tribonacci quaternion matrix as follows:
\begin{equation}\label{eq:7}
\mathbf{Q}_{v}=\left[\begin{array}{ccc}
Q_{v,4}&K_{v,2}&tQ_{v,3}\\
Q_{v,3}&K_{v,1}&tQ_{v,2}\\
Q_{v,2}&K_{v,0}&tQ_{v,1}
\end{array}\right],
\end{equation}
where $K_{v,n}=sQ_{v,n+1}+tQ_{v,n}$. With the help of this matrix, the author obtained the next formula for generalized Tribonacci quaternions $$Q_{v,n+2}=Q_{v,2}U_{n+2}+(sQ_{v,1}+tQ_{v,0})U_{n+1}+tQ_{v,1}U_{n},\ n\geq 0$$ where $U_{n}=V_{n}(0,0,1;r,s,t)$.

\section{Main Theorems and Proofs}
In this section, we consider that there is a spinor corresponding to every real quaternion and we match each generalized Tribonacci quaternion with one spinor which has two complex components. Then, we introduce generalized Tribonacci spinors. Similarly, we express the particular case of third-order Jacobsthal spinors. Then, using the recent work of Eri\c{s}ar and G\"ung\"or in \cite{Er}, we give some theorems and formulas for generalized Tribonacci and Tribonacci spinors.

Let the quaternion $Q_{v,n}=V_{n}+\sum_{l=1}^{3}V_{n+l}\mathrm{e}_{l}$ be $n$-th generalized Tribonacci quaternion where $V_{n}$ is $n$-th generalized Tribonacci number. Let the set of generalized Tribonacci quaternions be $\mathbb{T}_{r,s,t}$.

Now, we consider the correspondence between spinors and quaternions in Eq. (\ref{eq:4}) and we write the following transformation:
\begin{equation}\label{eq:tri}
\begin{aligned}
\sigma:\ &\mathbb{T}_{r,s,t}  \longrightarrow \mathrm{S}_{p}\\
&\ \ \mathrm{q} \longmapsto \sigma\left( V_{n}+\sum_{l=1}^{3}V_{n+l}\mathrm{e}_{l}\right)=\left[ \begin{array}{c} V_{n+3}+iV_{n} \\ V_{n+1}+iV_{n+2} \end{array}\right]\equiv \mathrm{A}_{v,n},
\end{aligned}
\end{equation}
where $\{\mathrm{e}_{l}:\ l=1,2,3\}$ coincide with basis vectors given for real quaternions. This transformation is linear and injective but it is not surjective. So, with the help of this transformation, we obtain a new sequence from generalized Tribonacci quaternions. We say that this new sequence is a generalized Tribonacci spinor sequence. Note that 
\begin{equation}\label{eq:8}
\mathrm{A}_{v,n+3}=r\mathrm{A}_{v,n+2}+s\mathrm{A}_{v,n+1}+t\mathrm{A}_{v,n},\ n\geq 0,
\end{equation}
can be written. Then, the generalized Tribonacci spinor sequence is the linear recurrence sequence.
Now, we give some algebraic definitions for generalized Tribonacci spinors.

\begin{definition}[Conjugates]
Let the conjugate of the generalized Tribonacci quaternion $Q_{v,n}$ be $Q_{v,n}^{*}=V_{n}-\sum_{l=1}^{3}V_{n+l}\mathrm{e}_{l}$. So, from the correspondence in Eq. (\ref{eq:8}), the generalized Tribonacci spinor $\mathrm{A}_{v,n}^{*}$ corresponding to conjugate of generalized Tribonacci quaternion is written by $$\sigma\left( V_{n}-\sum_{l=1}^{3}V_{n+l}\mathrm{e}_{l}\right)=\left[ \begin{array}{c} -V_{n+3}+iV_{n} \\ -V_{n+1}-iV_{n+2} \end{array}\right]\equiv \mathrm{A}_{v,n}^{*}.$$

On the other hand, we can write that the ordinary complex conjugate of generalized Tribonacci spinor $\mathrm{A}_{v,n}$ is $$\mathrm{A}_{v,n}=\left[ \begin{array}{c} V_{n+3}+iV_{n} \\ V_{n+1}+iV_{n+2} \end{array}\right] \longrightarrow \overline{\mathrm{A}}_{v,n}=\left[ \begin{array}{c} V_{n+3}-iV_{n} \\ V_{n+1}-iV_{n+2} \end{array}\right].$$ 

According to the spinor conjugate given in Eq. (\ref{eq:2}) by Cartan, generalized Tribonacci spinor conjugate $\widetilde{\mathrm{A}}_{v,n}=i\mathrm{C}\overline{\mathrm{A}}_{v,n}$ is as follows:
 $$\mathrm{A}_{v,n}=\left[ \begin{array}{c} V_{n+3}+iV_{n} \\ V_{n+1}+iV_{n+2} \end{array}\right] \longrightarrow \widetilde{\mathrm{A}}_{v,n}=\left[ \begin{array}{c} V_{n+2}+iV_{n+1} \\ -V_{n}-iV_{n+3} \end{array}\right].$$
 
In addition to that, Torres Del Castillo and S\'anchez Barrales obtained the mate of spinor in Eq. (\ref{eq:3}). According to this, the mate of generalized Tribonacci spinor is $\widehat{\mathrm{A}}_{v,n}=-\mathrm{C}\overline{\mathrm{A}}_{v,n}$ and
 $$\mathrm{A}_{v,n}=\left[ \begin{array}{c} V_{n+3}+iV_{n} \\ V_{n+1}+iV_{n+2} \end{array}\right] \longrightarrow \widehat{\mathrm{A}}_{v,n}=\left[ \begin{array}{c} -V_{n+1}+iV_{n+2} \\ V_{n+3}-iV_{n} \end{array}\right].$$
\end{definition}

The following corollary gives the relationship between the conjugates for generalized Tribonacci spinors.

\begin{corollary}\label{cor:1}
Let the $n$-th generalized Tribonacci spinor be $\mathrm{A}_{v,n}$. The correspondences between
conjugates of generalized Tribonacci spinors hold
\begin{equation}\label{cor:1}
\mathrm{C}\widehat{\mathrm{A}}_{v,n}= \left[ \begin{array}{cc} 0& 1\\ -1 &0 \end{array}\right]\left[ \begin{array}{c} -V_{n+1}+iV_{n+2} \\ V_{n+3}-iV_{n} \end{array}\right]=\overline{\mathrm{A}}_{v,n},
\end{equation}
\begin{equation}\label{cor:2}
i\widetilde{\mathrm{A}}_{v,n}= i\left[ \begin{array}{c} V_{n+2}+iV_{n+1} \\ -V_{n}-iV_{n+3} \end{array}\right]=\widehat{\mathrm{A}}_{v,n},
\end{equation}
\begin{equation}\label{cor:3}
i\mathrm{C}\widetilde{\mathrm{A}}_{v,n}= \left[ \begin{array}{cc} 0& i\\ -i &0 \end{array}\right]\left[ \begin{array}{c} V_{n+2}+iV_{n+1} \\ -V_{n}-iV_{n+3} \end{array}\right]=\overline{\mathrm{A}}_{v,n}.
\end{equation}
\end{corollary}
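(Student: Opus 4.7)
The plan is to verify each of the three identities by direct matrix computation, exploiting the defining relations $\widetilde{\mathrm{A}}_{v,n}=i\mathrm{C}\overline{\mathrm{A}}_{v,n}$ and $\widehat{\mathrm{A}}_{v,n}=-\mathrm{C}\overline{\mathrm{A}}_{v,n}$ together with the elementary fact that $\mathrm{C}^{2}=-I_{2}$, where $\mathrm{C}=\bigl[\begin{smallmatrix}0&1\\-1&0\end{smallmatrix}\bigr]$.

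For the first identity, I would substitute the definition of the mate: $\mathrm{C}\widehat{\mathrm{A}}_{v,n}=\mathrm{C}(-\mathrm{C}\overline{\mathrm{A}}_{v,n})=-\mathrm{C}^{2}\overline{\mathrm{A}}_{v,n}=\overline{\mathrm{A}}_{v,n}$. As a sanity check, I would also carry out the matrix product explicitly on $\bigl[-V_{n+1}+iV_{n+2},\ V_{n+3}-iV_{n}\bigr]^{T}$ and confirm it returns $\bigl[V_{n+3}-iV_{n},\ V_{n+1}-iV_{n+2}\bigr]^{T}$, which is exactly the ordinary complex conjugate $\overline{\mathrm{A}}_{v,n}$ displayed earlier.

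For the second identity, I would use $\widetilde{\mathrm{A}}_{v,n}=i\mathrm{C}\overline{\mathrm{A}}_{v,n}$ and multiply through by $i$: $i\widetilde{\mathrm{A}}_{v,n}=i^{2}\mathrm{C}\overline{\mathrm{A}}_{v,n}=-\mathrm{C}\overline{\mathrm{A}}_{v,n}=\widehat{\mathrm{A}}_{v,n}$. Again a direct check on the explicit column $\bigl[V_{n+2}+iV_{n+1},\ -V_{n}-iV_{n+3}\bigr]^{T}$ multiplied by $i$ yields $\bigl[iV_{n+2}-V_{n+1},\ -iV_{n}+V_{n+3}\bigr]^{T}$, which matches the definition of $\widehat{\mathrm{A}}_{v,n}$ term by term.

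The third identity follows by combining the first two: $i\mathrm{C}\widetilde{\mathrm{A}}_{v,n}=\mathrm{C}(i\widetilde{\mathrm{A}}_{v,n})=\mathrm{C}\widehat{\mathrm{A}}_{v,n}=\overline{\mathrm{A}}_{v,n}$, using (\ref{cor:2}) and then (\ref{cor:1}). Alternatively one may expand directly as $i\mathrm{C}(i\mathrm{C}\overline{\mathrm{A}}_{v,n})=-\mathrm{C}^{2}\overline{\mathrm{A}}_{v,n}=\overline{\mathrm{A}}_{v,n}$. There is no real obstacle here; the three statements are essentially bookkeeping consequences of $\mathrm{C}^{2}=-I_{2}$, and the only thing to watch carefully is the placement of the scalar $i$ and the signs that arise from $i^{2}=-1$ when combining it with $-\mathrm{C}$.
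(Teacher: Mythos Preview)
Your proof is correct. The paper states this corollary without proof, treating it as an immediate consequence of the definitions of $\widetilde{\mathrm{A}}_{v,n}$ and $\widehat{\mathrm{A}}_{v,n}$; your direct verification via $\mathrm{C}^{2}=-I_{2}$ and the explicit matrix checks is precisely the routine computation the paper leaves implicit.
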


\begin{definition}[Norm]
The norm of a generalized Tribonacci quaternion $Nr(Q_{v,n})=Q_{v,n}Q_{v,n}^{*}$ is equal to the norm of associated generalized Tribonacci spinor $$Nr(Q_{v,n})=\left[ \begin{array}{cc} V_{n+3}-iV_{n} & V_{n+1}-iV_{n+2} \end{array}\right]\left[ \begin{array}{c} V_{n+3}+iV_{n} \\ V_{n+1}+iV_{n+2} \end{array}\right].$$

In addition, if we use Corollary \ref{cor:1}, then we give the norm of generalized tribonacci spinor $$Nr(Q_{v,n})=\overline{\mathrm{A}}_{v,n}^{t}\mathrm{A}_{v,n}=-\widehat{\mathrm{A}}_{v,n}^{t}\mathrm{C}^{t}\mathrm{A}_{v,n}=-i\widetilde{\mathrm{A}}_{v,n}^{t}\mathrm{C}^{t}\mathrm{A}_{v,n}.$$
\end{definition}

Now, we give the Binet formula for generalized Tribonacci spinors solving generalized Tribonacci spinor recurrence relation in Eq. (\ref{eq:8}).
\begin{theorem}\label{teo:1}
For $n\geq 0$, the Binet formula for the $n$-th generalized Tribonacci spinor $\mathrm{A}_{v,n}$ is that
\begin{equation}\label{eq:10}
\begin{aligned}
\mathrm{A}_{v,n}&=\frac{\mathrm{P}\alpha^{n}}{(\alpha-\omega_{1})(\alpha-\omega_{2})}\left[ \begin{array}{c} \alpha^{3}+i \\ \alpha+i\alpha^{2} \end{array}\right]-\frac{\mathrm{Q}\omega_{1}^{n}}{(\alpha-\omega_{1})(\omega_{1}-\omega_{2})}\left[ \begin{array}{c} \omega_{1}^{3}+i \\ \omega_{1}+i\omega_{1}^{2} \end{array}\right]\\
&\ \ + \frac{\mathrm{R}\omega_{2}^{n}}{(\alpha-\omega_{2})(\omega_{1}-\omega_{2})}\left[ \begin{array}{c} \omega_{2}^{3}+i \\ \omega_{2}+i\omega_{2}^{2} \end{array}\right],
\end{aligned}
\end{equation}
where $\mathrm{P}$, $\mathrm{Q}$ and $\mathrm{R}$ as in Eq. (\ref{eq:66}).
\end{theorem}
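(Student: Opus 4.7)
The plan is to exploit the linearity of the correspondence $\sigma$ from Eq.~(\ref{eq:4}), together with the quaternionic Binet formula (\ref{eq:66}) that is already available. By construction $\mathrm{A}_{v,n}=\sigma(Q_{v,n})$, and (\ref{eq:66}) exhibits $Q_{v,n}$ as a linear combination of the three quaternions $\underline{\alpha}\alpha^{n}$, $\underline{\omega_{1}}\omega_{1}^{n}$, $\underline{\omega_{2}}\omega_{2}^{n}$ with complex coefficients built from $\mathrm{P},\mathrm{Q},\mathrm{R}$ and the Vandermonde denominators $(\alpha-\omega_{1})(\alpha-\omega_{2})$, etc. Because $\sigma$ as defined in (\ref{eq:4}) acts by extracting and reshuffling the four scalar coordinates of a quaternion, it extends $\mathbb{C}$-linearly to the complexified quaternion algebra $\mathbb{H}\otimes\mathbb{C}$, so we may apply it termwise to (\ref{eq:66}).

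Once that $\mathbb{C}$-linearity is in hand, the whole proof collapses to a single computation done in turn for $\lambda\in\{\alpha,\omega_{1},\omega_{2}\}$:
$$\sigma(\underline{\lambda})=\sigma\!\left(\mathrm{e}_{0}+\lambda\mathrm{e}_{1}+\lambda^{2}\mathrm{e}_{2}+\lambda^{3}\mathrm{e}_{3}\right)=\left[\begin{array}{c}\lambda^{3}+i\\ \lambda+i\lambda^{2}\end{array}\right].$$
Substituting these three vectors for $\underline{\alpha}$, $\underline{\omega_{1}}$, $\underline{\omega_{2}}$ in the image of (\ref{eq:66}) under $\sigma$ reproduces exactly the right-hand side of (\ref{eq:10}).

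The only conceptual hurdle is this passage from real to complex scalars, since the text has only recorded $\sigma(k\mathrm{q})=k\sigma(\mathrm{q})$ for $k\in\mathbb{R}$. If one prefers to avoid the complexification remark, an equally short alternative is to solve the recurrence (\ref{eq:8}) directly: each of the two entries of $\mathrm{A}_{v,n}$ is a fixed $\mathbb{C}$-linear combination of $V_{n},V_{n+1},V_{n+2},V_{n+3}$, and each $V_{n+j}$ satisfies the scalar recurrence with characteristic polynomial $x^{3}-rx^{2}-sx-t$. Hence $\mathrm{A}_{v,n}=\mathcal{A}\alpha^{n}+\mathcal{B}\omega_{1}^{n}+\mathcal{C}\omega_{2}^{n}$ for some constant $\mathcal{A},\mathcal{B},\mathcal{C}\in\mathbb{C}^{2}$, determined by the Vandermonde system at $n=0,1,2$ with data $\mathrm{A}_{v,0},\mathrm{A}_{v,1},\mathrm{A}_{v,2}$. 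Solving that system produces the same coefficients $\mathrm{P}/[(\alpha-\omega_{1})(\alpha-\omega_{2})]$ and partners that already appeared in (\ref{eq:66}), which is the main bookkeeping step but requires no new identities beyond those used for the quaternionic Binet formula; I would quote that computation rather than redo it.
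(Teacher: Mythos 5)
Your proposal is correct, and your primary argument is genuinely different from the paper's. The paper proves the theorem by solving the spinor recurrence (\ref{eq:8}) directly: it posits $\mathrm{A}_{v,n}=a_{1}\alpha^{n}+a_{2}\omega_{1}^{n}+a_{3}\omega_{2}^{n}$ with vector constants $a_{1},a_{2},a_{3}\in\mathbb{C}^{2}$, imposes the initial data $\mathrm{A}_{v,0},\mathrm{A}_{v,1},\mathrm{A}_{v,2}$, and solves the resulting Vandermonde system (the linear algebra is compressed into the phrase ``if we make necessary adjustments''); this is exactly the fallback route you sketch in your last paragraph. Your preferred route instead pushes the already-established quaternionic Binet formula (\ref{eq:66}) through $\sigma$, which reduces the whole theorem to the one-line computation
$$\sigma(\underline{\lambda})=\sigma\bigl(\mathrm{e}_{0}+\lambda\mathrm{e}_{1}+\lambda^{2}\mathrm{e}_{2}+\lambda^{3}\mathrm{e}_{3}\bigr)=\left[\begin{array}{c}\lambda^{3}+i\\ \lambda+i\lambda^{2}\end{array}\right],\qquad \lambda\in\{\alpha,\omega_{1},\omega_{2}\},$$
and avoids redoing the Vandermonde inversion entirely. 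You are right to flag the one genuine subtlety: the paper only records $\sigma(k\mathrm{q})=k\sigma(\mathrm{q})$ for $k\in\mathbb{R}$, while the coefficients and the quaternions $\underline{\alpha}\alpha^{n}$, etc.\ in (\ref{eq:66}) are complex, so one must observe that the coordinate formula defining $\sigma$ extends $\mathbb{C}$-linearly to $\mathbb{H}\otimes_{\mathbb{R}}\mathbb{C}$ and that the extended map agrees with $\sigma$ on the real quaternion $Q_{v,n}$; once that is said, the argument is complete. Your approach buys economy and makes transparent why the spinor Binet formula is just the quaternionic one with $\underline{\lambda}$ replaced by $\sigma(\underline{\lambda})$; the paper's approach is self-contained but silently repeats the same computation that produced $\mathrm{P},\mathrm{Q},\mathrm{R}$ in the first place.
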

\begin{proof}
We consider the generalized Tribonacci spinor sequence $\{\mathrm{A}_{v,n}\}_{n\geq 0}$. The characteristic equation of recurrence relation of generalized spinors is also $x^{3}-rx^{2}-sx-t=0$. Moreover, the roots of this equation are $\alpha$, $\omega_{1}$ and $\omega_{2}$ as in Eq. (\ref{eq:66}). We assume the initial values $$\mathrm{A}_{v,0}=\left[ \begin{array}{c} V_{3}+iV_{0} \\ V_{1}+iV_{2} \end{array}\right],\ \ \mathrm{A}_{v,1}=\left[ \begin{array}{c} V_{4}+iV_{1} \\ V_{2}+iV_{3} \end{array}\right],\ \ \mathrm{A}_{v,2}=\left[ \begin{array}{c} V_{5}+iV_{2} \\ V_{3}+iV_{4} \end{array}\right]$$ for the equation $\mathrm{A}_{v,n}=a_{1}\alpha^{n}+a_{2}\omega_{1}^{n}+a_{3}\omega_{2}^{n}$. So, for generalized Tribonacci spinor $\mathrm{A}_{v,0}$, ($n=0$) we have $\mathrm{A}_{v,0}=a_{1}+a_{2}+a_{3}=\left[ \begin{array}{c} V_{3}+iV_{0} \\ V_{1}+iV_{2} \end{array}\right]$, for generalized Tribonacci spinor $\mathrm{A}_{v,1}$, ($n=1$) also we obtain $\mathrm{A}_{v,1}=a_{1}\alpha+a_{2}\omega_{1}+a_{3}\omega_{2}=\left[ \begin{array}{c} V_{4}+iV_{1} \\ V_{2}+iV_{3} \end{array}\right]$ and finally for $n=2$, we have  $\mathrm{A}_{v,2}=a_{1}\alpha^{2}+a_{2}\omega_{1}^{2}+a_{3}\omega_{2}^{2}=\left[ \begin{array}{c} V_{5}+iV_{2} \\ V_{3}+iV_{4} \end{array}\right]$. If we make necessary adjustments we find that the spinors $$a_{1}=\frac{\mathrm{P}}{(\alpha-\omega_{1})(\alpha-\omega_{2})}\left[ \begin{array}{c} \alpha^{3}+i \\ \alpha+i\alpha^{2} \end{array}\right],$$ $$a_{2}=-\frac{\mathrm{Q}}{(\alpha-\omega_{1})(\omega_{1}-\omega_{2})}\left[ \begin{array}{c} \omega_{1}^{3}+i \\ \omega_{1}+i\omega_{1}^{2} \end{array}\right]$$ and $$a_{3}=\frac{\mathrm{R}}{(\alpha-\omega_{2})(\omega_{1}-\omega_{2})}\left[ \begin{array}{c} \omega_{2}^{3}+i \\ \omega_{2}+i\omega_{2}^{2} \end{array}\right].$$
So, we obtain that Binet's formula for $\mathrm{A}_{v,n}$ is obtained as follows:
\begin{align*}
\mathrm{A}_{v,n}&=a_{1}\alpha^{n}+a_{2}\omega_{1}^{n}+a_{3}\omega_{2}^{n}\\
&=\frac{\mathrm{P}\alpha^{n}}{(\alpha-\omega_{1})(\alpha-\omega_{2})}\left[ \begin{array}{c} \alpha^{3}+i \\ \alpha+i\alpha^{2} \end{array}\right]-\frac{\mathrm{Q}\omega_{1}^{n}}{(\alpha-\omega_{1})(\omega_{1}-\omega_{2})}\left[ \begin{array}{c} \omega_{1}^{3}+i \\ \omega_{1}+i\omega_{1}^{2} \end{array}\right]\\
&\ \ + \frac{\mathrm{R}\omega_{2}^{n}}{(\alpha-\omega_{2})(\omega_{1}-\omega_{2})}\left[ \begin{array}{c} \omega_{2}^{3}+i \\ \omega_{2}+i\omega_{2}^{2} \end{array}\right].
\end{align*}
The result is obtained.
\end{proof}

\begin{corollary}
For $n\geq 0$, the Binet formula for the $n$-th Tribonacci spinor $\mathrm{T}_{n}$ is that
\begin{equation}\label{eq:10}
\begin{aligned}
\mathrm{T}_{n}&=\frac{\alpha^{n+1}}{(\alpha-\omega_{1})(\alpha-\omega_{2})}\left[ \begin{array}{c} \alpha^{3}+i \\ \alpha+i\alpha^{2} \end{array}\right]-\frac{\omega_{1}^{n+1}}{(\alpha-\omega_{1})(\omega_{1}-\omega_{2})}\left[ \begin{array}{c} \omega_{1}^{3}+i \\ \omega_{1}+i\omega_{1}^{2} \end{array}\right]\\
&\ \ + \frac{\omega_{2}^{n+1}}{(\alpha-\omega_{2})(\omega_{1}-\omega_{2})}\left[ \begin{array}{c} \omega_{2}^{3}+i \\ \omega_{2}+i\omega_{2}^{2} \end{array}\right],
\end{aligned}
\end{equation}
where $\mathrm{T}_{n}=\mathrm{A}_{v,n}$ and $V_{n}=V_{n}(0,1,1;1,1,1)$ is the $n$-th classic Tribonacci number.
\end{corollary}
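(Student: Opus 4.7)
The plan is to obtain the corollary as a direct specialization of Theorem \ref{teo:1}. Since $\mathrm{T}_n = \mathrm{A}_{v,n}$ is defined as the generalized Tribonacci spinor associated with the parameter choice $V_0=0,\ V_1=1,\ V_2=1$ and $r=s=t=1$, the whole argument reduces to evaluating the constants $\mathrm{P}$, $\mathrm{Q}$, $\mathrm{R}$ that appear in Eq. (\ref{eq:66}) under this specialization and then absorbing them into the powers $\alpha^n$, $\omega_1^n$, $\omega_2^n$.

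First, I would write out the characteristic polynomial in the classical Tribonacci case, namely $x^3 - x^2 - x - 1 = 0$, whose roots are $\alpha,\omega_1,\omega_2$. Applying Vieta's formulas gives $\alpha+\omega_1+\omega_2 = 1$, $\alpha\omega_1+\alpha\omega_2+\omega_1\omega_2 = -1$ and $\alpha\omega_1\omega_2 = 1$. These identities will be the only substantive input needed.

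Second, I would substitute $V_0=0,\ V_1=1,\ V_2=1$ into the definitions of $\mathrm{P},\mathrm{Q},\mathrm{R}$ from Eq. (\ref{eq:66}). This immediately gives
\begin{align*}
\mathrm{P} &= 1-(\omega_1+\omega_2) = 1-(1-\alpha) = \alpha,\\
\mathrm{Q} &= 1-(\alpha+\omega_2) = \omega_1,\\
\mathrm{R} &= 1-(\alpha+\omega_1) = \omega_2,
\end{align*}
so the three numerators collapse to $\mathrm{P}\alpha^n = \alpha^{n+1}$, $\mathrm{Q}\omega_1^n = \omega_1^{n+1}$, and $\mathrm{R}\omega_2^n = \omega_2^{n+1}$.

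Finally, I would plug these simplifications back into the Binet formula of Theorem \ref{teo:1} to recover exactly the displayed formula for $\mathrm{T}_n$. There is no real obstacle: the only thing one has to be careful about is applying Vieta correctly (and noting that the column-vector factors $[\alpha^3+i,\ \alpha+i\alpha^2]^t$, etc.\ depend only on the roots and not on the initial data $V_0,V_1,V_2$, so they remain unchanged under the specialization). The result then follows immediately.
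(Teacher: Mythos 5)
Your proposal is correct and follows exactly the route the paper intends: the corollary is an immediate specialization of Theorem \ref{teo:1}, and your computation $\mathrm{P}=\alpha$, $\mathrm{Q}=\omega_{1}$, $\mathrm{R}=\omega_{2}$ via $\alpha+\omega_{1}+\omega_{2}=r=1$ is the whole content of the deduction. (Only the first Vieta relation is actually needed; the other two are superfluous.)
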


Now, we obtain the generating functions for generalized Tribonacci spinors.
\begin{theorem}\label{teo:3}
For $n\geq 0$, the generating function for the $n$-th generalized Tribonacci spinor $\mathrm{A}_{v,n}$ is
\begin{equation}\label{eq:12}
g_{\mathrm{A}_{v,n}}(x)=\frac{1}{\rho(x)}\left[ \begin{array}{c} \rho_{1}V_{3}+\rho_{2}V_{4}+x^{2}V_{5}+i (\rho_{1}V_{0}+\rho_{2}V_{1}+x^{2}V_{2})\\ \rho_{1}V_{1}+\rho_{2}V_{2}+x^{2}V_{3}+i (\rho_{1}V_{2}+\rho_{2}V_{3}+x^{2}V_{4}) \end{array}\right],
\end{equation}
where $\rho(x)=1-rx-sx^{2}-tx^{3}$, $\rho_{1}=\rho_{1}(x)=1-rx-sx^{2}$ and $\rho_{2}=\rho_{2}(x)=x-rx^{2}$. Furthermore, $V_{n}$ as in Eq. (\ref{eq:5}).
\end{theorem}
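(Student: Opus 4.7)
The plan is to apply the standard generating-function technique adapted to spinor-valued sequences. Set
$g_{\mathrm{A}_{v,n}}(x) = \sum_{n \geq 0} \mathrm{A}_{v,n}\, x^{n}$,
regarded as a formal power series whose coefficients are two-component spinors (complex linear combinations of the $V_{n+l}$). Multiply through by $\rho(x) = 1 - rx - sx^{2} - tx^{3}$, and reorganise the product by powers of $x$. For every $n \geq 3$, the coefficient of $x^{n}$ in $\rho(x)\, g_{\mathrm{A}_{v,n}}(x)$ is
$\mathrm{A}_{v,n} - r\mathrm{A}_{v,n-1} - s\mathrm{A}_{v,n-2} - t\mathrm{A}_{v,n-3}$,
which vanishes by the spinor recurrence (\ref{eq:8}). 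Hence only three terms survive:
\begin{equation*}
\rho(x)\, g_{\mathrm{A}_{v,n}}(x) \;=\; \mathrm{A}_{v,0} + \bigl(\mathrm{A}_{v,1} - r\mathrm{A}_{v,0}\bigr) x + \bigl(\mathrm{A}_{v,2} - r\mathrm{A}_{v,1} - s\mathrm{A}_{v,0}\bigr) x^{2}.
\end{equation*}

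The second step is to substitute the explicit initial spinors
$\mathrm{A}_{v,0}, \mathrm{A}_{v,1}, \mathrm{A}_{v,2}$
(already displayed in the proof of Theorem \ref{teo:1}) into the right-hand side and split each slot into its real and imaginary parts. For the upper component I would collect the three coefficients along $V_{3}, V_{4}, V_{5}$: the coefficient of $V_{3}$ is $1 - rx - sx^{2}$, the coefficient of $V_{4}$ is $x - rx^{2}$, and the coefficient of $V_{5}$ is $x^{2}$, which by definition are $\rho_{1}$, $\rho_{2}$, and $x^{2}$ respectively. The imaginary part of the upper component is organised identically but with indices shifted to $V_{0}, V_{1}, V_{2}$, yielding $\rho_{1} V_{0} + \rho_{2} V_{1} + x^{2} V_{2}$. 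The lower component is handled the same way, with indices shifted by two in the real part and by one in the imaginary part, producing $\rho_{1}V_{1} + \rho_{2}V_{2} + x^{2}V_{3}$ and $\rho_{1}V_{2} + \rho_{2}V_{3} + x^{2}V_{4}$.

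Dividing by $\rho(x)$ then gives the claimed formula (\ref{eq:12}). There is no substantive obstacle in the argument; the only care needed is purely bookkeeping, namely verifying that after substitution the three $x$-polynomials multiplying the components of the initial spinors regroup exactly into $\rho_{1}$, $\rho_{2}$, and $x^{2}$. This identification is the single place where a clean indexing of the $V_{n+l}$ inside each spinor slot is essential, so I would perform that regrouping entry by entry before combining the real and imaginary parts back into the spinor in (\ref{eq:12}).
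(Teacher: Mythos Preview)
Your proposal is correct and follows essentially the same route as the paper: multiply the generating series by $\rho(x)=1-rx-sx^{2}-tx^{3}$, use the recurrence (\ref{eq:8}) to kill all coefficients beyond degree~$2$, substitute the initial spinors $\mathrm{A}_{v,0},\mathrm{A}_{v,1},\mathrm{A}_{v,2}$, and regroup the resulting $x$-polynomials as $\rho_{1},\rho_{2},x^{2}$. The only cosmetic slip is your phrase ``shifted by one in the imaginary part'' for the lower component---the imaginary indices go from $V_{0},V_{1},V_{2}$ (upper) to $V_{2},V_{3},V_{4}$ (lower), a shift of two---but your stated outputs $\rho_{1}V_{2}+\rho_{2}V_{3}+x^{2}V_{4}$ are correct, so this is purely a wording issue.
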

\begin{proof}
If the generating function $g_{\mathrm{A}_{v,n}}(x)=\sum_{n\geq 0}\mathrm{A}_{v,n}x^{n}$ is considered, then using the equations $rxg_{\mathrm{A}_{v,n}}(x)$, $sx^{2}g_{\mathrm{A}_{v,n}}(x)$ and $tx^{3}g_{\mathrm{A}_{v,n}}(x)$, we obtain that
\begin{align*}
(1-rx-&sx^{2}-tx^{3})g_{\mathrm{A}_{v,n}}(x)\\
&=\mathrm{A}_{v,0}+(\mathrm{A}_{v,1}-r\mathrm{A}_{v,0})x+(\mathrm{A}_{v,2}-r\mathrm{A}_{v,1}-t\mathrm{A}_{v,0})x^{2}\\
&=\left[ \begin{array}{c} V_{3}+i V_{0}\\ V_{1}+i V_{2} \end{array}\right]+\left[ \begin{array}{c} V_{4}-rV_{3}+i (V_{1}-rV_{0})\\ V_{2}-rV_{1}+i (V_{3}-rV_{2}) \end{array}\right]x\\
&\ \ + \left[ \begin{array}{c} V_{5}-rV_{4}-sV_{3}+i (V_{2}-rV_{1}-sV_{0})\\ V_{3}-rV_{2}-sV_{1}+i (V_{4}-rV_{3}-sV_{2}) \end{array}\right]x^{2}.
\end{align*}
If we regulate the last equation, we have
\begin{align*}
(1-&rx-sx^{2}-tx^{3})g_{\mathrm{A}_{v,n}}(x)\\
&= \left[ \begin{array}{c} \rho_{1}V_{3}+\rho_{2}V_{4}+x^{2}V_{5}+i (\rho_{1}V_{0}+\rho_{2}V_{1}+x^{2}V_{2})\\ \rho_{1}V_{1}+\rho_{2}V_{2}+x^{2}V_{3}+i (\rho_{1}V_{2}+\rho_{2}V_{3}+x^{2}V_{4}) \end{array}\right],
\end{align*}
where $\rho_{1}=\rho_{1}(x)=1-rx-sx^{2}$ and $\rho_{2}=\rho_{2}(x)=x-rx^{2}$. Then, the result is obtained.
\end{proof}

\begin{corollary}
For $n\geq 0$, the generating function for the $n$-th Tribonacci spinor $\mathrm{T}_{n}$ is
\begin{equation}\label{eq:12}
g_{\mathrm{T}_{n}}(x)=\frac{1}{1-x-x^{2}-x^{3}}\left[ \begin{array}{c} 2+2x+x^{2}+i x\\ 1+i (1+x+x^{2}) \end{array}\right].
\end{equation}
\end{corollary}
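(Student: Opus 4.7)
The plan is to deduce this as a direct specialization of Theorem \ref{teo:3} to the classical Tribonacci parameters $(r,s,t)=(1,1,1)$ and initial conditions $(V_{0},V_{1},V_{2})=(0,1,1)$. Under this choice, the denominator polynomial becomes $\rho(x)=1-x-x^{2}-x^{3}$, and the auxiliary polynomials reduce to $\rho_{1}(x)=1-x-x^{2}$ and $\rho_{2}(x)=x-x^{2}$; the rest of the work is arithmetic.

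First I would list the Tribonacci numbers needed by the vector in \eqref{eq:12}, namely $V_{0},\dots,V_{5}$. Using the recurrence $V_{n}=V_{n-1}+V_{n-2}+V_{n-3}$ from the seed $(0,1,1)$, I obtain $V_{3}=2$, $V_{4}=4$, and $V_{5}=7$. These six values feed into the four scalar expressions $\rho_{1}V_{3}+\rho_{2}V_{4}+x^{2}V_{5}$, $\rho_{1}V_{0}+\rho_{2}V_{1}+x^{2}V_{2}$, $\rho_{1}V_{1}+\rho_{2}V_{2}+x^{2}V_{3}$, and $\rho_{1}V_{2}+\rho_{2}V_{3}+x^{2}V_{4}$ appearing inside the column vector in Theorem \ref{teo:3}.

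Next I would carry out the four polynomial computations and collect terms. For instance, the top real component expands as $(1-x-x^{2})\cdot 2+(x-x^{2})\cdot 4+7x^{2}=2+2x+x^{2}$, and the top imaginary component collapses to $(x-x^{2})\cdot 1+x^{2}\cdot 1=x$. The analogous reductions give $1$ for the bottom real component and $1+x+x^{2}$ for the bottom imaginary component. Substituting these four results into \eqref{eq:12} yields exactly the stated vector, and dividing by $\rho(x)=1-x-x^{2}-x^{3}$ completes the identification with $g_{\mathrm{T}_{n}}(x)$.

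There is no real obstacle here: the corollary is a calculational byproduct of Theorem \ref{teo:3}, and the only thing to watch is the sign bookkeeping when combining $-x-x^{2}$ from $\rho_{1}$ with $x$ from $\rho_{2}$, since several cancellations (as in the bottom-real component, where everything except $1$ vanishes) are what make the final expression so compact. Provided the Tribonacci values are read off correctly, the verification is routine.
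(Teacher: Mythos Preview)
Your proposal is correct and is exactly the approach implicit in the paper: the corollary is stated without proof as an immediate specialization of Theorem~\ref{teo:3} to the classical Tribonacci data $(r,s,t)=(1,1,1)$, $(V_{0},V_{1},V_{2})=(0,1,1)$, and your arithmetic with $\rho_{1}$, $\rho_{2}$ and $V_{0},\dots,V_{5}$ checks out in every component.
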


Now, similar to $\mathbf{Q}_{v}$-matrix given for generalized Tribonacci quaternions in Eq. (\ref{eq:7}) we can also give one matrix for generalized Tribonacci spinors. So, with the aid of this matrix, we can obtain am special formula for generalized Tribonacci spinors. Thus, we first express the following theorem.

\begin{theorem}\label{teo:4}
Generalized Tribonacci spinor matrix, which has the same behavior as the $\mathbf{Q}_{v}$-matrix given for generalized Tribonacci quaternions, is given by $$\mathcal{Q}_{v}=-\left[\begin{array}{ccc}
\mathrm{A}_{v,4}&\breve{\mathrm{K}}_{v,2}&t\breve{\mathrm{A}}_{v,3}\\
\mathrm{A}_{v,3}&\breve{\mathrm{K}}_{v,1}&t\breve{\mathrm{A}}_{v,2}\\
\mathrm{A}_{v,2}&\breve{\mathrm{K}}_{v,0}&t\breve{\mathrm{A}}_{v,1}
\end{array}\right],\ \breve{\mathrm{K}}_{v,n}=s\breve{\mathrm{A}}_{v,n+1}+t\breve{\mathrm{A}}_{v,n}$$
where $\breve{\mathrm{A}}_{v,0}=\left[ \begin{array}{cc} V_{3}+iV_{0} &V_{1}-iV_{2} \\ V_{1}+iV_{2} &-V_{3}+iV_{0} \end{array}\right]$, $\breve{\mathrm{A}}_{v,1}=\left[ \begin{array}{cc} V_{4}+iV_{1}&V_{2}-iV_{3}  \\ V_{2}+iV_{3} &-V_{4}+iV_{1}\end{array}\right]$ and $\breve{\mathrm{A}}_{v,2}=\left[ \begin{array}{cc} V_{5}+iV_{2}&V_{3}-iV_{4} \\ V_{3}+iV_{4} &-V_{5}+iV_{2}\end{array}\right]$.
\end{theorem}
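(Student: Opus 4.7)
The plan is to transport every relevant property of $\mathbf{Q}_{v}$ through the correspondences of Section 1 to the candidate matrix $\mathcal{Q}_{v}$. First I would identify each $2\times 2$ block $\breve{\mathrm{A}}_{v,n}$ as exactly the spinor matrix $\breve{P}$ associated to the quaternion $Q_{v,n}$ by Vivarelli's product rule (\ref{prod}), and each column $\mathrm{A}_{v,n}$ as the spinor image of $Q_{v,n}$ under (\ref{eq:tri}). Thus, entrywise and up to the global sign out front, $\mathcal{Q}_{v}$ is precisely the spinor translation of $\mathbf{Q}_{v}$.

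Next, the scalar recurrence (\ref{eq:6}) together with the $\mathbb{R}$-linearity of $\sigma$ and of the map $Q_{v,n}\mapsto\breve{\mathrm{A}}_{v,n}$ gives
\begin{equation*}
\mathrm{A}_{v,n+3}=r\mathrm{A}_{v,n+2}+s\mathrm{A}_{v,n+1}+t\mathrm{A}_{v,n},\qquad \breve{\mathrm{A}}_{v,n+3}=r\breve{\mathrm{A}}_{v,n+2}+s\breve{\mathrm{A}}_{v,n+1}+t\breve{\mathrm{A}}_{v,n}.
\end{equation*}
By the same linearity, $\breve{\mathrm{K}}_{v,n}=s\breve{\mathrm{A}}_{v,n+1}+t\breve{\mathrm{A}}_{v,n}$ is the spinor translation of $K_{v,n}=sQ_{v,n+1}+tQ_{v,n}$. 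Hence each of the three columns of $\mathcal{Q}_{v}$ satisfies the same third-order linear recurrence as the corresponding column of $\mathbf{Q}_{v}$, which already forces all recurrence-based identities of $\mathbf{Q}_{v}$ to descend to $\mathcal{Q}_{v}$.

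To obtain the closed-form companion of $Q_{v,n+2}=Q_{v,2}U_{n+2}+(sQ_{v,1}+tQ_{v,0})U_{n+1}+tQ_{v,1}U_{n}$, I would use that the coefficients $U_{n}=V_{n}(0,0,1;r,s,t)$ are real scalars and therefore commute with both $\sigma$ and $Q_{v,n}\mapsto\breve{\mathrm{A}}_{v,n}$; applying linearity to both sides yields the analogous expansion for $\mathrm{A}_{v,n+2}$ in terms of $\mathrm{A}_{v,0},\mathrm{A}_{v,1},\mathrm{A}_{v,2}$ and likewise for $\breve{\mathrm{A}}_{v,n+2}$ in terms of $\breve{\mathrm{A}}_{v,0},\breve{\mathrm{A}}_{v,1},\breve{\mathrm{A}}_{v,2}$. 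Interpreting these as a block decomposition of $\mathcal{Q}_{v}$ in the initial triple of spinor matrices finally certifies that $\mathcal{Q}_{v}$ exhibits the same behaviour as $\mathbf{Q}_{v}$.

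The principal obstacle will be the careful bookkeeping of the factor $-i$ and the overall minus sign inherited from (\ref{prod}): additive identities transfer automatically by $\mathbb{R}$-linearity, but any identity that genuinely involves a \emph{product} of two generalized Tribonacci quaternions (for instance a Cassini-type relation concealed in $\mathbf{Q}_{v}$) must be lifted through $\mathrm{p}\times\mathrm{q}\longmapsto -i\breve{P}Q$, and it is precisely this prefactor that one must verify to be compatible with the leading $-1$ in the definition of $\mathcal{Q}_{v}$ before concluding that the two matrices share the same algebraic behaviour.
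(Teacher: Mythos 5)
There is a genuine gap, and you have in fact located it yourself without closing it. Your last paragraph names as ``the principal obstacle'' precisely the step that constitutes the paper's entire proof: the theorem's content (and in particular the overall factor $-1$ in $\mathcal{Q}_{v}$ and the otherwise unmotivated mixed structure in which the first column consists of $2\times 1$ spinors while the other columns consist of $2\times 2$ matrices) is justified only by lifting a \emph{product} identity, not an additive one. Concretely, the paper expands
\begin{equation*}
\tfrac{1}{t}\det(\mathbf{Q}_{v})=Q_{v,1}K_{v,1}Q_{v,4}+Q_{v,2}K_{v,2}Q_{v,2}+Q_{v,3}K_{v,0}Q_{v,3}-Q_{v,1}K_{v,2}Q_{v,3}-Q_{v,2}K_{v,0}Q_{v,4}-Q_{v,3}K_{v,1}Q_{v,2}
\end{equation*}
and applies Vivarelli's rule (\ref{prod}) twice to each triple product: $\mathrm{p}\times\mathrm{q}\times\mathrm{r}\longmapsto(-i)\breve{P}\bigl((-i)\breve{Q}R\bigr)=(-i)^{2}\breve{P}\breve{Q}R=-\breve{P}\breve{Q}R$. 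The factor $(-i)^{2}=-1$ is what forces the leading minus sign in $\mathcal{Q}_{v}$, and the pattern ``matrix, matrix, column'' in each term $\breve{\mathrm{A}}_{v,i}\breve{\mathrm{K}}_{v,j}\mathrm{A}_{v,k}$ is what dictates which columns of $\mathcal{Q}_{v}$ carry $2\times 2$ blocks and which carries column spinors. None of this can be recovered from your additive arguments.

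The parts of your proposal that are carried out --- the transfer of the recurrence (\ref{eq:6}) to $\mathrm{A}_{v,n}$ and $\breve{\mathrm{A}}_{v,n}$, and of the expansion $Q_{v,n+2}=Q_{v,2}U_{n+2}+(sQ_{v,1}+tQ_{v,0})U_{n+1}+tQ_{v,1}U_{n}$, via $\mathbb{R}$-linearity of $\sigma$ --- are correct but do not bear on the claim: linearity preserves all such identities verbatim and can never introduce a sign, so it cannot certify (or even motivate) the $-1$ in the definition of $\mathcal{Q}_{v}$. To complete the proof you would need to replace the ``one must verify'' clause by the actual verification: write out the cofactor expansion of $\det(\mathbf{Q}_{v})$, translate each of the six triple products through (\ref{prod}) applied twice, and read off that the resulting spinor expression is the corresponding expansion of the stated $\mathcal{Q}_{v}$. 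That computation is short, but it is the proof.
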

\begin{proof}
Let $n$-th generalized Tribonacci spinor $\mathrm{A}_{v,n}$ correspond to $n$-th generalized Tribonacci quaternion $Q_{v,n}$. Let us consider the $\mathbf{Q}_{v}$-matrix in Eq. (\ref{eq:7}); then the product of two quaternions can be written by spinor matrix in Eq. (\ref{prod}). For example we have the following equations $Q_{v,1}K_{v,3}Q_{v,4}=-\breve{\mathrm{A}}_{v,1}\breve{\mathrm{K}}_{v,1}\mathrm{A}_{v,4}$ ($Q_{v,1}K_{v,1}Q_{v,4}$ is preferred over the multiplication $Q_{v,4}K_{v,1}Q_{v,1}$), where $\breve{\mathrm{K}}_{v,1}=s\breve{\mathrm{A}}_{v,2}+t\breve{\mathrm{A}}_{v,1}$. Moreover, for $\mathbf{Q}_{v}$-matrix we obtain that
\begin{align*}
\frac{1}{t}\det(\mathbf{Q}_{v})&=Q_{v,1} \begin{vmatrix} Q_{v,4}  & K_{v,2}  \\ Q_{v,3} & K_{v,1} \end{vmatrix}-Q_{v,2} \begin{vmatrix} Q_{v,4}  & K_{v,2}  \\ Q_{v,2} & K_{v,0} \end{vmatrix}+Q_{v,3} \begin{vmatrix} Q_{v,3}  & K_{v,1}  \\ Q_{v,2} & K_{v,0} \end{vmatrix}\\
&=Q_{v,1}K_{v,1}Q_{v,4}+Q_{v,2}K_{v,2}Q_{v,2}+Q_{v,3}K_{v,0}Q_{v,3}\\
&\ \ -Q_{v,1}K_{v,2}Q_{v,3}-Q_{v,2}K_{v,0}Q_{v,4}-Q_{v,3}K_{v,1}Q_{v,2}\\
&=-\breve{\mathrm{A}}_{v,1}\breve{\mathrm{K}}_{v,1}\mathrm{A}_{v,4}-\breve{\mathrm{A}}_{v,2}\breve{\mathrm{K}}_{v,2}\mathrm{A}_{v,2}-\breve{\mathrm{A}}_{v,3}\breve{\mathrm{K}}_{v,0}\mathrm{A}_{v,3}\\
&\ \ + \breve{\mathrm{A}}_{v,1}\breve{\mathrm{K}}_{v,2}\mathrm{A}_{v,3}+\breve{\mathrm{A}}_{v,2}\breve{\mathrm{K}}_{v,0}\mathrm{A}_{v,4}+\breve{\mathrm{A}}_{v,3}\breve{\mathrm{K}}_{v,1}\mathrm{A}_{v,2}.
\end{align*}
Finally, we can choose $$\mathcal{Q}_{v}=-\left[\begin{array}{ccc}
\mathrm{A}_{v,4}&\breve{\mathrm{K}}_{v,2}&t\breve{\mathrm{A}}_{v,3}\\
\mathrm{A}_{v,3}&\breve{\mathrm{K}}_{v,1}&t\breve{\mathrm{A}}_{v,2}\\
\mathrm{A}_{v,2}&\breve{\mathrm{K}}_{v,0}&t\breve{\mathrm{A}}_{v,1}
\end{array}\right],$$
where $\breve{\mathrm{A}}_{v,0}=\left[ \begin{array}{cc} V_{3}+iV_{0} &V_{1}-iV_{2} \\ V_{1}+iV_{2} &-V_{3}+iV_{0} \end{array}\right]$, $\breve{\mathrm{A}}_{v,1}=\left[ \begin{array}{cc} V_{4}+iV_{1}&V_{2}-iV_{3}  \\ V_{2}+iV_{3} &-V_{4}+iV_{1}\end{array}\right]$ and $\breve{\mathrm{A}}_{v,2}=\left[ \begin{array}{cc} V_{5}+iV_{2}&V_{3}-iV_{4} \\ V_{3}+iV_{4} &-V_{5}+iV_{2}\end{array}\right]$. 
\end{proof}

\begin{corollary}
Tribonacci spinor matrix, which has the same behavior as the $\mathbf{T}_{v}$-matrix given for Tribonacci quaternions, is given by $$\mathcal{T}=-\left[\begin{array}{ccc}
\mathrm{T}_{4}&\breve{\mathrm{T}}_{3}+\breve{\mathrm{T}}_{2}&\breve{\mathrm{T}}_{3}\\
\mathrm{T}_{3}&\breve{\mathrm{T}}_{2}+\breve{\mathrm{T}}_{1}&\breve{\mathrm{T}}_{2}\\
\mathrm{T}_{2}&\breve{\mathrm{T}}_{1}+\breve{\mathrm{T}}_{0}&\breve{\mathrm{T}}_{1}
\end{array}\right],$$
where $$\breve{\mathrm{T}}_{0}=\left[ \begin{array}{cc} 2 &1-i \\ 1+i &-2 \end{array}\right], \breve{\mathrm{T}}_{1}=\left[ \begin{array}{cc} 4+i&1-2i  \\ 1+2i &-4+i\end{array}\right], \breve{\mathrm{T}}_{2}=\left[ \begin{array}{cc} 7+i&2-4i \\ 2+4i &-7+i\end{array}\right].$$
\end{corollary}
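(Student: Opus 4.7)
The plan is to obtain the corollary as a direct specialization of Theorem \ref{teo:4} to the parameters $r=s=t=1$ together with the classical Tribonacci initial conditions $V_{0}=0$, $V_{1}=1$, $V_{2}=1$, since this is exactly the data that defines the Tribonacci spinor $\mathrm{T}_{n}$ via the convention $\mathrm{T}_{n}=\mathrm{A}_{v,n}$ used in the paper. First I would compute the handful of small Tribonacci values that appear in the matrices: iterating $V_{n}=V_{n-1}+V_{n-2}+V_{n-3}$ yields $V_{3}=2$, $V_{4}=4$, $V_{5}=7$. Substituting these numbers into the definitions of $\breve{\mathrm{A}}_{v,0}$, $\breve{\mathrm{A}}_{v,1}$ and $\breve{\mathrm{A}}_{v,2}$ given in the statement of Theorem \ref{teo:4} produces precisely the three $2\times 2$ matrices $\breve{\mathrm{T}}_{0}$, $\breve{\mathrm{T}}_{1}$, $\breve{\mathrm{T}}_{2}$ displayed in the corollary; for instance $V_{3}+iV_{0}=2$, $V_{1}-iV_{2}=1-i$, $V_{4}+iV_{1}=4+i$, $V_{2}-iV_{3}=1-2i$, and so on.

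Next I would simplify the structural columns of the matrix $\mathcal{Q}_{v}$. With $s=t=1$, the auxiliary quantity $\breve{\mathrm{K}}_{v,n}=s\breve{\mathrm{A}}_{v,n+1}+t\breve{\mathrm{A}}_{v,n}$ collapses to $\breve{\mathrm{T}}_{n+1}+\breve{\mathrm{T}}_{n}$, which accounts for the second column of $\mathcal{T}$, while the factor $t\breve{\mathrm{A}}_{v,n+1}$ reduces to $\breve{\mathrm{T}}_{n+1}$, giving the third column. The first-column entries $\mathrm{A}_{v,n}$ are, by the transformation in Eq. (\ref{eq:tri}), just the Tribonacci spinors $\mathrm{T}_{n}$ for $n=2,3,4$.

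No genuine obstacle arises here: the corollary is essentially a direct substitution into Theorem \ref{teo:4}, and the entire content of the argument lies in verifying the six complex entries of each $\breve{\mathrm{T}}_{j}$ from the explicit values $V_{0},\dots,V_{5}$ computed above. The only point to be careful about is the bookkeeping, so that the indices in $\breve{\mathrm{A}}_{v,n+1}$ and $\breve{\mathrm{A}}_{v,n}$ are matched correctly to the rows of $\mathcal{T}$; once this is done, the identification $\mathcal{Q}_{v}=\mathcal{T}$ in the Tribonacci case is immediate.
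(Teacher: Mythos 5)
Your proposal is correct and is exactly the route the paper takes: the corollary is a direct specialization of Theorem \ref{teo:4} to $r=s=t=1$ with $V_{0}=0$, $V_{1}=1$, $V_{2}=1$, and your computed values $V_{3}=2$, $V_{4}=4$, $V_{5}=7$ reproduce the displayed matrices $\breve{\mathrm{T}}_{0}$, $\breve{\mathrm{T}}_{1}$, $\breve{\mathrm{T}}_{2}$ and the simplifications $\breve{\mathrm{K}}_{v,n}=\breve{\mathrm{T}}_{n+1}+\breve{\mathrm{T}}_{n}$ and $t\breve{\mathrm{A}}_{v,n}=\breve{\mathrm{T}}_{n}$. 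The paper gives this corollary without proof precisely because it is this immediate substitution.
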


Now, we give the determinant formula for Tribonacci spinors as a result of Theorem \ref{teo:4} and \cite[Theorem 3.1]{Ce1}:
$$\mathbf{Q}_{v}\left[\begin{array}{ccc}
r&s&t\\
1&0&0\\
0&1&0
\end{array}\right]^{n}=\left[\begin{array}{ccc}
Q_{v,n+4}&K_{v,n+2}&tQ_{v,n+3}\\
Q_{v,n+3}&K_{v,n+1}&tQ_{v,n+2}\\
Q_{v,n+2}&K_{v,n}&tQ_{v,n+1}
\end{array}\right],$$ 
where $K_{v,n}=sQ_{v,n+1}+tQ_{v,n}$ and $r=s=t=1$. So, the following corollary can be given without proof. 

\begin{corollary}
For $n\geq 0$, we have determinant formula for Tribonacci spinors
$$\left\lbrace \begin{array}{cc} \breve{\mathrm{T}}_{n+1}\breve{\mathrm{L}}_{n+1}\mathrm{T}_{n+4}+\breve{\mathrm{T}}_{n+2}\breve{\mathrm{L}}_{n+2}\mathrm{T}_{n+2}+\breve{\mathrm{T}}_{n+3}\breve{\mathrm{L}}_{n}\mathrm{T}_{n+3}\\
\ \ -\breve{\mathrm{T}}_{n+1}\breve{\mathrm{L}}_{n+2}\mathrm{T}_{n+3}-\breve{\mathrm{T}}_{n+2}\breve{\mathrm{L}}_{n}\mathrm{T}_{4}-\breve{\mathrm{T}}_{n+3}\breve{\mathrm{L}}_{n+1}\mathrm{T}_{n+2}\end{array}\right\rbrace=4\left[\begin{array}{c} -1+i \\ 1-i\end{array}\right],$$ where $\breve{\mathrm{L}}_{n}=\breve{\mathrm{T}}_{n+1}+\breve{\mathrm{T}}_{n}$.
\end{corollary}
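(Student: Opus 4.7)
The plan is to take the non-commutative ``determinant'' (in the cofactor-expansion convention established in the proof of Theorem \ref{teo:4}) of both sides of the matrix identity displayed just before the corollary, specialized to the classical Tribonacci values $r=s=t=1$. The companion matrix on the left-hand side of that identity has ordinary determinant equal to $t=1$ and has real scalar entries, so the column operations it induces preserve the cofactor-expansion value. Consequently the determinant of the shifted right-hand side matrix is independent of $n\geq 0$ and equals the constant quaternion $\det(\mathbf{Q}_{v})$.

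First, I would expand the determinant of the shifted right-hand side matrix along its third column, following exactly the convention used in the proof of Theorem \ref{teo:4}. This produces six triple quaternion products whose indices follow the pattern $(n+1,n+1,n+4),\ (n+2,n+2,n+2),\ (n+3,n,n+3)$ with one sign and $(n+1,n+2,n+3),\ (n+2,n,n+4),\ (n+3,n+1,n+2)$ with the opposite sign, matching the six terms inside the braces in the statement (up to the evident typographical slip $\mathrm{T}_{4}$ for $\mathrm{T}_{n+4}$).

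Next, I would translate each triple quaternion product $Q_{v,i}K_{v,j}Q_{v,k}$ into its spinor image. Iterating the spinor correspondence in Eq. (\ref{prod}) once more gives $\sigma(PQR)=(-i)^{2}\breve{P}\breve{Q}R=-\breve{P}\breve{Q}R$. Because the Tribonacci specialization yields $K_{v,m}=Q_{v,m+1}+Q_{v,m}$, the object $\breve{K}_{v,m}$ coincides with $\breve{\mathrm{L}}_{m}=\breve{\mathrm{T}}_{m+1}+\breve{\mathrm{T}}_{m}$. Collecting the six resulting triple spinor products with correct signs reproduces exactly the bracketed expression on the left of the corollary.

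Finally, I would fix the invariant value by evaluating $\det(\mathbf{Q}_{v})$ at $n=0$ directly, substituting the classical Tribonacci values $T_{0}=0,\ T_{1}=T_{2}=1,\ T_{3}=2,\ T_{4}=4,\ T_{5}=7,\ T_{6}=13,\ T_{7}=24$ to form $Q_{v,1},\ldots,Q_{v,4}$ and $K_{v,0},K_{v,1},K_{v,2}$; then compute the six triple quaternion products in the prescribed order, sum them, apply $\sigma$, and incorporate the overall sign from $PQR\mapsto -\breve{P}\breve{Q}R$. I expect the main obstacle to be precisely this last arithmetic step: the quaternion product is non-commutative, so each of the six triples must be multiplied in exactly the ordering dictated by the cofactor convention, and a single misordering or a sign slip while passing through $\sigma$ would corrupt the target constant $4\left[\begin{array}{c}-1+i\\ 1-i\end{array}\right]$ recorded in the statement. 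Once this constant is pinned down, the $n$-invariance established in the first step completes the proof.
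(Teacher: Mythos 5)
The paper itself supplies no argument here (it explicitly states the corollary ``can be given without proof,'' pointing only to Theorem \ref{teo:4} and the matrix identity from \cite[Theorem 3.1]{Ce1}), and your plan is exactly the route the paper intends: take the cofactor-style expansion of $\mathbf{Q}_{v}\left[\begin{smallmatrix} r&s&t\\ 1&0&0\\ 0&1&0\end{smallmatrix}\right]^{n}$ at $r=s=t=1$, push it through the Vivarelli correspondence $pqr\mapsto -\breve{P}\breve{Q}R$, and pin the constant by an $n=0$ computation. Your translation steps (linearity of $q\mapsto\breve{Q}$, so that $\breve{K}_{v,m}=\breve{\mathrm{L}}_{m}$ in the Tribonacci specialization; the sign $(-i)^{2}=-1$; the identification of the six signed triples) are all sound, and you are right that the base-case arithmetic is delicate but routine.

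The genuine gap is in your first step, the $n$-invariance. You argue that because the companion matrix $C$ has real entries and $\det(C)=t=1$, right-multiplication by $C^{n}$ ``preserves the cofactor-expansion value.'' For a matrix with quaternion entries this does not follow. Writing the paper's expansion as $D(M)=\sum_{\sigma}\operatorname{sgn}(\sigma)\,m_{\sigma(3),3}\,m_{\sigma(2),2}\,m_{\sigma(1),1}$, one has $D(MC)=\sum_{j,k,l}c_{j3}c_{k2}c_{l1}\sum_{\sigma}\operatorname{sgn}(\sigma)\,m_{\sigma(3),j}\,m_{\sigma(2),k}\,m_{\sigma(1),l}$, and the cross terms with a repeated column index $(j=k$, say$)$ cancel in pairs only if $m_{a,j}m_{b,j}=m_{b,j}m_{a,j}$, i.e.\ only if entries drawn from the same column commute. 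The entries here are the quaternions $Q_{v,2},Q_{v,3},Q_{v,4}$ and $K_{v,0},K_{v,1},K_{v,2}$, which do not commute, so the alternating property fails and $D(MC^{n})=D(M)\det(C)^{n}$ is not automatic. To close the argument you must establish the $n$-independence by hand: either substitute the recurrence $\mathrm{T}_{n+4}=\mathrm{T}_{n+3}+\mathrm{T}_{n+2}+\mathrm{T}_{n+1}$ (and likewise for $\breve{\mathrm{T}}$, $\breve{\mathrm{L}}$) into the six-term sum and verify directly that the expression for $n+1$ collapses to the expression for $n$ — checking that the non-cancelling commutator terms happen to vanish for these particular quaternions — or compute the whole six-term sum in closed form from the Binet formula of Theorem \ref{teo:1}. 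Without one of these, the reduction to the $n=0$ case is unsupported, and that reduction is the entire content of the corollary beyond the base-case arithmetic.
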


Now, the following corollary can be given without proof. The proof is obtained by mathematical induction.
\begin{theorem}\label{teo:5}
For every integer $n\geq 0$. The summation formula for generalized Tribonacci spinors is as follows: 
\begin{align*}
\delta \sum_{l=0}^{n}\mathrm{A}_{v,l}&=\mathrm{A}_{v,n+2}+(1-r)\mathrm{A}_{v,n+1}+t\mathrm{A}_{v,n}\\
&\ \  +\left[\begin{array}{c} (r+s)V_{3}+(r-1)V_{4}-V_{5}+i((r+s)V_{0}+(r-1)V_{1}-V_{2})\\ (r+s)V_{1}+(r-1)V_{2}-V_{3}+i((r+s)V_{2}+(r-1)V_{3}-V_{4})
\end{array}\right],
\end{align*}
where $\delta=r+s+t-1$ and $V_{n}$ is the $n$-th generalized Tribonacci number.
\end{theorem}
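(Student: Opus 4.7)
The plan is to prove the identity by induction on $n$, with the key reduction being that the recurrence (\ref{eq:8}) for $\mathrm{A}_{v,n}$ absorbs the inductive step exactly. Write $E$ for the constant spinor appearing as the bracketed column on the right-hand side, so the claim takes the form
$$\delta\sum_{l=0}^{n}\mathrm{A}_{v,l}=\mathrm{A}_{v,n+2}+(1-r)\mathrm{A}_{v,n+1}+t\mathrm{A}_{v,n}+E.$$

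For the base case $n=0$, the identity rearranges to
$$E=(\delta-t)\mathrm{A}_{v,0}+(r-1)\mathrm{A}_{v,1}-\mathrm{A}_{v,2}=(r+s-1)\mathrm{A}_{v,0}+(r-1)\mathrm{A}_{v,1}-\mathrm{A}_{v,2}.$$
Substituting the explicit columns of $\mathrm{A}_{v,0},\mathrm{A}_{v,1},\mathrm{A}_{v,2}$ from (\ref{eq:tri}) and reading off each of the four real entries then matches $E$ against the stated constant column.

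For the inductive step, assume the formula holds at level $n$ and add $\delta\mathrm{A}_{v,n+1}$ to both sides. The coefficient of $\mathrm{A}_{v,n+1}$ on the right becomes $(1-r)+\delta=s+t$, giving
$$\mathrm{A}_{v,n+2}+(s+t)\mathrm{A}_{v,n+1}+t\mathrm{A}_{v,n}+E.$$
Comparing with the desired level-$(n+1)$ right-hand side
$$\mathrm{A}_{v,n+3}+(1-r)\mathrm{A}_{v,n+2}+t\mathrm{A}_{v,n+1}+E,$$
the difference collapses to $\mathrm{A}_{v,n+3}=r\mathrm{A}_{v,n+2}+s\mathrm{A}_{v,n+1}+t\mathrm{A}_{v,n}$, which is exactly the recurrence (\ref{eq:8}). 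This closes the induction.

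The only real obstacle is the bookkeeping in the base case, where one has to match the four scalar entries of $E$ to the column stated in the theorem. A conceptually cleaner alternative avoids this altogether: the quaternion summation formula recalled in Section \ref{sec:1} reads $\delta\sum_{l=0}^{n}Q_{v,l}=Q_{v,n+2}+(1-r)Q_{v,n+1}+tQ_{v,n}+\omega$, and applying the linear transformation $\sigma$ of (\ref{eq:4}), with $\sigma(Q_{v,k})=\mathrm{A}_{v,k}$, yields the spinor identity at once with $E=\sigma(\omega)$. Evaluating $\sigma(\omega)$ via (\ref{eq:4}) and the definition of $\omega$ then produces the stated column by inspection.
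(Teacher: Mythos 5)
The paper gives no proof of this theorem at all (it is announced as ``given without proof \dots\ by mathematical induction''), so there is nothing to compare your route against; your induction, and your alternative of pushing the quaternion summation formula through the linear map $\sigma$, are both the natural arguments. Your inductive step is correct: adding $\delta\mathrm{A}_{v,n+1}$ turns the coefficient of $\mathrm{A}_{v,n+1}$ into $s+t$, and the mismatch with the level-$(n+1)$ right-hand side is exactly the recurrence (\ref{eq:8}).

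The problem is the one step you defer. Your base-case expression $E=(r+s-1)\mathrm{A}_{v,0}+(r-1)\mathrm{A}_{v,1}-\mathrm{A}_{v,2}$ is correct, but when you expand it using (\ref{eq:tri}) the coefficients of $V_3,V_0,V_1,V_2$ are all $(r+s-1)$, not $(r+s)$: for instance the first entry is $(r+s-1)V_{3}+(r-1)V_{4}-V_{5}+i\bigl((r+s-1)V_{0}+(r-1)V_{1}-V_{2}\bigr)$. This agrees with your second route, since $\sigma(\omega)$ has first entry $\omega_3+i\lambda$ with $\lambda=(r+s-1)V_{0}+(r-1)V_{1}-V_{2}$ as recalled in Section 1, but it does \emph{not} agree with the column printed in the theorem, which differs from the correct constant by exactly $+\mathrm{A}_{v,0}$. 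A concrete check: for $r=s=t=1$ and $(V_0,\dots,V_5)=(0,1,1,2,4,7)$ the base case forces $E=\mathrm{T}_0-\mathrm{T}_2=\left[\begin{smallmatrix}-5-i\\-1-3i\end{smallmatrix}\right]$, whereas the theorem's column evaluates to $\left[\begin{smallmatrix}-3-i\\-2i\end{smallmatrix}\right]$. So your assertion that ``reading off each of the four real entries then matches $E$ against the stated constant column'' is false as written; the verification you skipped is precisely where the argument fails against the statement as printed (the $(r+s)$ should be $(r+s-1)$). Once the constant is corrected, your proof is complete; as it stands, the claimed match in the base case is a genuine gap.
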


\begin{corollary}
For every integer $n\geq 0$. The summation formula for Tribonacci spinors is as follows: 
$$
\sum_{l=0}^{n}\mathrm{T}_{l}=\frac{1}{2}\left\lbrace \mathrm{T}_{n+2}+\mathrm{T}_{n}+\left[\begin{array}{c} -3-i\\ -2i
\end{array}\right]\right\rbrace .
$$
\end{corollary}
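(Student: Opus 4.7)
The plan is to obtain this corollary as a direct specialization of Theorem~\ref{teo:5} to the classical Tribonacci case, where $(r,s,t)=(1,1,1)$ and the underlying sequence is $V_n=V_n(0,1,1;1,1,1)$. Under this choice, the prefactor $\delta=r+s+t-1$ in Theorem~\ref{teo:5} becomes $\delta=2$, and the coefficient $1-r$ of $\mathrm{A}_{v,n+1}$ collapses to $0$, which is precisely why the middle term $\mathrm{T}_{n+1}$ disappears from the right-hand side of the corollary. So the first step is simply to write down Theorem~\ref{teo:5} with these substitutions and observe these two simplifications.

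Next, I would compute the small initial values of the classical Tribonacci sequence needed for the constant vector, namely $V_0=0,\ V_1=1,\ V_2=1,\ V_3=2,\ V_4=4,\ V_5=7$, using the recurrence $V_{n}=V_{n-1}+V_{n-2}+V_{n-3}$. Then I would substitute these into the four scalar entries of the constant vector appearing in Theorem~\ref{teo:5}, namely $(r+s)V_{3}+(r-1)V_{4}-V_{5}$, $(r+s)V_{0}+(r-1)V_{1}-V_{2}$, $(r+s)V_{1}+(r-1)V_{2}-V_{3}$, and $(r+s)V_{2}+(r-1)V_{3}-V_{4}$. With $r+s=2$ and $r-1=0$, these evaluate to $-3$, $-1$, $0$ and $-2$ respectively, so the constant column vector becomes
\[
\left[\begin{array}{c}-3+i(-1)\\ 0+i(-2)\end{array}\right]=\left[\begin{array}{c}-3-i\\-2i\end{array}\right],
\]
matching the corollary exactly.

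Finally, dividing through by $\delta=2$ produces the claimed formula. There is essentially no obstacle: the computation is mechanical and all that has to be verified is the arithmetic of the Tribonacci initial terms and the correct bookkeeping of the imaginary parts in the two spinor components. The only place where care is needed is to avoid an index shift when reading off $V_3,V_4,V_5$ (since the classical Tribonacci normalisation $V_0=0,V_1=1,V_2=1$ is used), and to keep the real and imaginary parts aligned inside each of the two coordinates of the spinor. Once that is done, the stated identity follows without further work, justifying the author's remark that the proof is by mathematical induction combined with the substitution scheme above, or equivalently by direct specialization of Theorem~\ref{teo:5}.
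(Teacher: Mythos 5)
Your route is the intended one---specialize Theorem~\ref{teo:5} to $(r,s,t)=(1,1,1)$ and $V_n=V_n(0,1,1;1,1,1)$---and your bookkeeping from the \emph{printed} statement of that theorem to the printed corollary is faithful: with $r+s=2$ and $r-1=0$ the four scalar entries do evaluate to $-3,\,-1,\,0,\,-2$, giving the constant $\bigl[\begin{smallmatrix}-3-i\\ -2i\end{smallmatrix}\bigr]$. The gap is that you never test the identity you end up with, and it is in fact false. Take $n=0$: the left-hand side is $\mathrm{T}_0=\bigl[\begin{smallmatrix}2\\ 1+i\end{smallmatrix}\bigr]$, while the right-hand side is $\tfrac12\bigl(\bigl[\begin{smallmatrix}7+i\\ 2+4i\end{smallmatrix}\bigr]+\bigl[\begin{smallmatrix}2\\ 1+i\end{smallmatrix}\bigr]+\bigl[\begin{smallmatrix}-3-i\\ -2i\end{smallmatrix}\bigr]\bigr)=\bigl[\begin{smallmatrix}3\\ \tfrac32+\tfrac32 i\end{smallmatrix}\bigr]$. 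So the corollary as stated cannot be proved; the defect propagates from Theorem~\ref{teo:5} itself, which you take at face value.

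The source of the error is the coefficient $(r+s)$ in the constant vector of Theorem~\ref{teo:5}: it should be $(r+s-1)$. Indeed, applying $\sigma$ to the quaternionic summation constant $\omega$ of Section~1, whose components are $\omega_0=\lambda$, $\omega_1=\lambda-\delta V_0$, $\omega_2=\lambda-\delta(V_0+V_1)$, $\omega_3=\lambda-\delta(V_0+V_1+V_2)$ with $\lambda=(r+s-1)V_0+(r-1)V_1-V_2$, one checks from the recurrence that $\omega_l=(r+s-1)V_l+(r-1)V_{l+1}-V_{l+2}$, and the spinor constant must be $\bigl[\begin{smallmatrix}\omega_3+i\omega_0\\ \omega_1+i\omega_2\end{smallmatrix}\bigr]$. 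For the classical Tribonacci data this equals $\bigl[\begin{smallmatrix}-5-i\\ -1-3i\end{smallmatrix}\bigr]$, and the corrected identity $\sum_{l=0}^{n}\mathrm{T}_l=\tfrac12\bigl(\mathrm{T}_{n+2}+\mathrm{T}_n+\bigl[\begin{smallmatrix}-5-i\\ -1-3i\end{smallmatrix}\bigr]\bigr)$ does hold (it checks at $n=0,1$ and follows by induction or by summing the quaternion formula componentwise). Your specialization scheme is the right method, but as written it mechanically transports an error in the theorem into the corollary; the one-line numerical sanity check at $n=0$ that your own plan invites would have exposed it.
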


\section{Conclusion}

In this study, we introduced the Tribonacci spinor and generalized Tribonacci spinor by using the relationships between spinors and quaternions.Today, Tribonacci numbers are the most common generalization of Fibonacci numbers, whose applications in science are infinite. For example, in sunflower, human body, corn pyramids, Pascal triangle, representation groups. On the other hand, quaternions are defined as obtained by enlarging the set of complex numbers. Quaternions have applications in physics, mathematics, engineering and robotics. Also, by means of spinors, a shorter and simpler representation of quaternions can be obtained. By combining this information, we have obtained a new series called generalized Tribonacci spinors $\{\mathrm{A}_{v,n}\}_{n\in \mathbb{N}}$. And we think that this study, in which we have obtained a new series, will be a basic study especially for mathematicians working on geometry and algebra. In a later work, we obtain curious properties of this type of numbers.

\medskip

\end{document}